\newtheorem{theorem}{Theorem}[section]
\newtheorem{lemma}{Lemma}[section]
\theoremstyle{definition}
\newtheorem{definition}{Definition}[section]
\begin{document}

\begin{center}
\vskip 1cm{\LARGE\bf On the Density of Ranges of Generalized Divisor Functions  
\vskip 1cm
\large
Colin Defant\footnote{This work was supported by National Science Foundation grant no. 1262930.}\\
Department of Mathematics\footnote{1400 Stadium Rd \\
University of Florida \\
Gainesville, FL 32611}\\
University of Florida\\
United States\\
cdefant@ufl.edu}
\end{center}
\vskip .2 in

\begin{abstract}
The range of the divisor function $\sigma_{-1}$ is dense in the interval $[1,\infty)$. However, the range of the function $\sigma_{-2}$ is not dense in the interval $\displaystyle{\left[1,\frac{\pi^2}{6}\right)}$. We begin by generalizing the divisor functions to a class of functions $\sigma_{t}$ for all real $t$. We then define a constant $\eta\approx 1.8877909$ and show that if $r\in(1,\infty)$, then the range of the function $\sigma_{-r}$ is dense in the interval $[1,\zeta(r))$ if and only if $r\leq\eta$. We end with an open problem.     
\end{abstract} 

\section{Introduction} 
Throughout this paper, we will let $\mathbb{N}$ denote the set of positive integers, and we will let $p_i$ denote the $i^{th}$ prime number.  
\par 
For any integer $t$, the divisor function $\sigma_t$ is a multiplicative arithmetic function defined by $\displaystyle{\sigma_t(n)=\sum_{\substack{d\vert n \\ d>0}}d^t}$ for all positive integers $n$. The value of $\sigma_1(n)$ is the sum of the positive divisors of $n$, while the value of $\sigma_0(n)$ is simply the number of positive divisors of $n$. Another interesting divisor function is $\sigma_{-1}$, which is often known as the abundancy 
index. One may show \cite{Laatsch86} that the range of $\sigma_{-1}$ is a subset of the interval $[1,\infty)$ that is dense in $[1,\infty)$. If $t<-1$, then the range of $\sigma_t$ is a subset of the interval $[1,\zeta(-t))$, where $\zeta$ denotes the 
Riemann zeta function. This is because, for any positive integer $n$, $\displaystyle{\sigma_{t}(n)=\sum_{\substack{d\vert n \\ d>0}}d^t<\sum_{i=1}^{\infty}i^t=\zeta(-t)}$. For example, the range of the function $\sigma_{-2}$ is a subset of the interval $\displaystyle{\left[1,\frac{\pi^2}{6}\right)}$. However, it is interesting to note that the range of the 
function $\sigma_{-2}$ is not dense in the interval $\displaystyle{\left[1,\frac{\pi^2}{6}\right)}$. To see this, let $n$ be a positive integer. If $2\vert n$, then $\displaystyle{\sigma_{-2}(n)\geq \frac{1}{1^2}+\frac{1}{2^2}=\frac{5}{4}}$. On the other hand, if $2\nmid n$, then $\displaystyle{\sigma_{-2}(n)<\sum_{d\in\mathbb{N}\backslash(2\mathbb{N})}\frac{1}{d^2}=\frac{\zeta(2)}{\left(\frac{1}{1-2^{-2}}\right)}=\frac{\pi^2}{8}}$. As $\displaystyle{\frac{\pi^2}{8}<\frac{5}{4}}$, we see that there is a ``gap" in the range of $\sigma_{-2}$. In other words, there are no positive integers $n$ such that $\displaystyle{\sigma_{-2}(n)\in\left(\frac{\pi^2}{8},\frac{5}{4}\right)}$. 
\par 
Our first goal is to generalize the divisor functions to allow for nonintegral subscripts. For example, we might consider the function $\sigma_{-\sqrt{2}}$, defined by $\displaystyle{\sigma_{-\sqrt{2}}(n)=\sum_{\substack{d\vert n \\ d>0}}d^{-\sqrt{2}}}$. We formalize this idea in the following definition. 
\begin{definition} \label{Def1.1} 
For a real number $t$, define the function $\sigma_t\colon\mathbb{N}\rightarrow\mathbb{R}$ by $\displaystyle{\sigma_t(n)=\sum_{\substack{d\vert n \\ d>0}}d^t}$ 
for all $n\in\mathbb{N}$. Also, we will let $\log\sigma_t=\log\circ\hspace{0.5 mm}\sigma_t$. 
\end{definition} 
\par 
In analyzing the ranges of these generalized divisor functions, we will find a constant which serves as a ``boundary" between divisor functions with dense ranges and divisor functions with ranges that have gaps. Note that, for any real number $t$, we may write $\sigma_t=I_0*I_t$, where $I_0$ and $I_t$ are arithmetic functions defined by $I_0(n)=1$ and $I_t(n)=n^t$. As $I_0$ and $I_t$ are multiplicative, we find that $\sigma_t$ is multiplicative. 

\section{The Ranges of Functions $\sigma_{-r}$} 

\begin{theorem} \label{Thm2.1}
Let $r$ be a real number greater than $1$. The range of $\sigma_{-r}$ is dense in the interval $[1,\zeta(r))$ if and only if $\displaystyle{1+\frac{1}{p_m^r}\leq\prod_{i=m+1}^{\infty}\left(\sum_{j=0}^{\infty}\frac{1}{p_i^{jr}}\right)}$ for all positive integers $m$. 
\end{theorem}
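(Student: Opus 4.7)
The plan is to prove each direction separately, with the ``only if'' direction handled by exhibiting an explicit gap and the ``if'' direction by a greedy approximation. Throughout I abbreviate $S_m := \prod_{i>m}(1-p_i^{-r})^{-1}$, so that the theorem's hypothesis reads $1 + p_m^{-r} \le S_m$, and note $S_0 = \zeta(r)$ by Euler's product.

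For the contrapositive of the ``only if'' direction, suppose $1 + p_m^{-r} > S_m$ for some $m$. I would split positive integers $n$ according to whether they are coprime to $p_1 p_2 \cdots p_m$. If $\gcd(n, p_1 \cdots p_m) = 1$, then $\sigma_{-r}(n) = \prod_{i>m}\sigma_{-r}(p_i^{a_i})$, which is strictly less than $S_m$ since only finitely many $a_i$ are positive. On the other hand, if $p_i \mid n$ for some $i\le m$, then $\sigma_{-r}(n)\ge 1+p_i^{-r}\ge 1+p_m^{-r}$ because $p_i\le p_m$. Hence the range is contained in $[1,S_m)\cup[1+p_m^{-r},\infty)$ and misses the nontrivial open interval $(S_m,\,1+p_m^{-r})\subseteq[1,\zeta(r))$, so the range is not dense.

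For the ``if'' direction, assume $1+p_m^{-r}\le S_m$ for every $m$, fix $V\in[1,\zeta(r))$, and build exponents $a_1,a_2,\dots$ greedily while maintaining the invariant
\[
R_m := V/P_m \in [1,\,S_{m-1}), \qquad P_m := \prod_{i<m}\sigma_{-r}(p_i^{a_i}),
\]
starting from $R_1 = V\in[1,S_0)$. At step $m$ I would choose $a_m$ to be the largest nonnegative integer with $\sigma_{-r}(p_m^{a_m})\le R_m$, taking $a_m$ arbitrarily large in the degenerate case $R_m\ge (1-p_m^{-r})^{-1}$ where the set is unbounded.

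The hard step is verifying that this choice preserves the invariant, i.e.\ that $R_{m+1}=R_m/\sigma_{-r}(p_m^{a_m})<S_m$, equivalently $\sigma_{-r}(p_m^{a_m})>R_m/S_m$. The three regimes $R_m<1+p_m^{-r}$, $R_m\in[1+p_m^{-r},(1-p_m^{-r})^{-1})$, and $R_m\ge(1-p_m^{-r})^{-1}$ each use the hypothesis; the crux is the middle regime, where combining the maximality bound $R_m-\sigma_{-r}(p_m^{a_m})<p_m^{-(a_m+1)r}$ with the lower estimate $R_m(1-1/S_m)\ge p_m^{-r}$ (derived from $R_m\ge1+p_m^{-r}$ and $S_m\ge1+p_m^{-r}$) yields $\sigma_{-r}(p_m^{a_m})>R_m-p_m^{-(a_m+1)r}\ge R_m/S_m$. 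Once the invariant is in hand, $S_m\to1$ forces $R_m\to1$, so the truncations $n_M:=\prod_{i\le M}p_i^{a_i}$ satisfy $\sigma_{-r}(n_M)=P_{M+1}\to V$, giving the desired density.
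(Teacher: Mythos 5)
Your proposal is correct, and the two halves compare to the paper's as follows. The ``only if'' half coincides with the paper's argument: both split $n$ according to whether some $p_i$ with $i\le m$ divides it and exhibit the gap $\left[S_m,\,1+p_m^{-r}\right)$ in the range. In the ``if'' half you run the same greedy rule as the paper (largest exponent of $p_m$ keeping the partial product at most the target), but your verification is genuinely different. The paper works with $\log\sigma_{-r}$, permits an ``infinite'' exponent ($\alpha_n=-1$), proves that the greedy partial sums $X_n$ converge to the target $x$ by a contradiction argument involving the minimal index at which the accumulated deficit $E_m$ exceeds $\log\zeta(r)-x$, and then must build a second sequence $C_n$ of honest values of $\log\sigma_{-r}$ to replace the infinite exponents. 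Your per-step invariant $R_m\in[1,S_{m-1})$ is more direct: it isolates exactly where the hypothesis enters (the case $a_m=0$ and the middle regime), convergence follows immediately from $S_m\to 1$, and all exponents stay finite, so the approximating integers $n_M$ fall out of the construction with no repair step. Your middle-regime computation checks out: $a_m\ge 1$ gives $R_m-\sigma_{-r}(p_m^{a_m})<p_m^{-(a_m+1)r}\le p_m^{-2r}<p_m^{-r}\le R_m(1-1/S_m)$. One wording needs tightening: in the regime $R_m\ge(1-p_m^{-r})^{-1}$, ``arbitrarily large'' should be ``sufficiently large.'' Preserving the invariant there requires $\sigma_{-r}(p_m^{a_m})>R_m/S_m$, and since $R_m/S_m<S_{m-1}/S_m=(1-p_m^{-r})^{-1}$ while $\sigma_{-r}(p_m^{a_m})$ increases to $(1-p_m^{-r})^{-1}$, this holds only once $a_m$ exceeds a threshold depending on how close $R_m$ is to $S_{m-1}$ (no use of the hypothesis is needed in this regime); a literal ``pick any large value'' could break the invariant.
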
 
\begin{proof}
First, suppose that $\displaystyle{1+\frac{1}{p_m^r}\leq\prod_{i=m+1}^{\infty}\left(\sum_{j=0}^{\infty}\frac{1}{p_i^{jr}}\right)}$ for all positive integers $m$. We will show that the range of $\log\sigma_{-r}$ is dense in the interval $[0,\log(\zeta(r)))$, which will imply that the range of $\sigma_{-r}$ is dense in $[1,\zeta(r))$. Choose some arbitrary $x\in(0,\log(\zeta(r)))$, and define $X_0=0$. For each positive integer $n$, we define $\alpha_n$ and $X_n$ in the following manner. If \\ 
$\displaystyle{X_{n-1}+\log\left(\sum_{j=0}^{\infty}\frac{1}{p_n^{jr}}\right)\leq x}$, define $\alpha_n=-1$. If $\displaystyle{X_{n-1}+\log\left(\sum_{j=0}^{\infty}\frac{1}{p_n^{jr}}\right)>x}$, define $\alpha_n$ to be the largest nonnegative integer that satisfies \\ 
$\displaystyle{{X_{n-1}+\log\left(\sum_{j=0}^{\alpha_n}\frac{1}{p_n^{jr}}\right)\leq x}}$. Define $X_n$ by 
\[X_n=\begin{cases} X_{n-1}+\log\left(\sum_{j=0}^{\alpha_n}\frac{1}{p_n^{jr}}\right), & \mbox{if } \alpha_n\geq 0; \\ X_{n-1}+\log\left(\sum_{j=0}^{\infty}\frac{1}{p_n^{jr}}\right), & \mbox{if } \alpha_n=-1. \end{cases}\]
 Also, for each $n\in\mathbb{N}$, define $D_n$ by 
 \[D_n=\begin{cases} \log\left(\sum_{j=0}^{\infty}\frac{1}{p_n^{jr}}\right)-\log\left(\sum_{j=0}^{\alpha_n}\frac{1}{p_n^{jr}}\right), & \mbox{if } \alpha_n\geq 0; \\ 0, & \mbox{if } \alpha_n=-1, \end{cases}\]
 and let $E_n=\displaystyle{\sum_{i=1}^n D_i}$. Note that 
\[\lim_{n\rightarrow\infty}(X_n+E_n)=\lim_{n\rightarrow\infty}\left(X_n+\sum_{i=1}^nD_i\right)\]
\[=\lim_{n\rightarrow\infty}\sum_{i=1}^n\log\left(\sum_{j=0}^{\infty}\frac{1}{p_i^{jr}}\right)=\log(\zeta(r)).\] 
\par 
Now, because the sequence $(X_n)_{n=1}^{\infty}$ is bounded and monotonic, we know that there exists some real number $\gamma$ such that $\displaystyle{\lim_{n\rightarrow\infty}}X_n=\gamma$. We wish to show that $\gamma=x$. 
\par 
Notice that we defined the sequence $(X_n)_{n=1}^{\infty}$ so that $X_n\leq x$ for all $n\in\mathbb{N}$. Hence, we know that $\gamma\leq x$. Now, suppose $\gamma<x$. Then $\displaystyle{\lim_{n\rightarrow\infty}}E_n=\log(\zeta(r))-\gamma>\log(\zeta(r))-x$. This implies that there 
exists some positive integer $N$ such that $E_n>\log(\zeta(r))-x$ for all integers $n\geq N$. Let $m$ be the smallest positive integer that satisfies $E_m>\log(\zeta(r))-x$. If $\alpha_m=-1$ and $m>1$, then $D_m=0$, so $E_{m-1}=E_m>\log(\zeta(r))-x$. However, this contradicts the minimality of $m$. If $\alpha_m=-1$ and $m=1$, then $0=D_m=E_m>\log(\zeta(r))-x$, which is also a contradiction. Thus, we conclude that $\alpha_m\geq 0$. This means that $\displaystyle{X_m+D_m=X_{m-1}+\log\left(\sum_{j=0}^{\infty}\frac{1}{p_m^{jr}}\right)>x}$, so $D_m>x-X_m$. Furthermore, 
\[\log\left(\prod_{i=m+1}^{\infty}\left(\sum_{j=0}^{\infty}\frac{1}{p_i^{jr}}\right)\right)=\sum_{i=m+1}^{\infty}\log\left(\sum_{j=0}^{\infty}\frac{1}{p_i^{jr}}\right)\]
\[=\log(\zeta(r))-\sum_{i=1}^m\log\left(\sum_{j=0}^{\infty}\frac{1}{p_i^{jr}}\right)\] 
\begin{equation} \label{Eq2.1}
=\log(\zeta(r))-E_m-X_m<x-X_m<D_m,
\end{equation} 
and we originally assumed that $\displaystyle{1+\frac{1}{p_m^r}\leq\prod_{i=m+1}^{\infty}\left(\sum_{j=0}^{\infty}\frac{1}{p_i^{jr}}\right)}$.     
This means that $\displaystyle{\log\left(1+\frac{1}{p_m^r}\right)<D_m=\log\left(\sum_{j=0}^{\infty}\frac{1}{p_m^{jr}}\right)-\log\left(\sum_{j=0}^{\alpha_m}\frac{1}{p_m^{jr}}\right)}$, or, \\ equivalently, $\displaystyle{\log\left(1+\frac{1}{p_m^r}\right)+\log\left(\sum_{j=0}^{\alpha_m}\frac{1}{p_m^{jr}}\right)<\log\left(\frac{p_m^r}{p_m^r-1}\right)}$. If $\alpha_m>0$, we have 
\[\log\left(\left(1+\frac{1}{p_m^r}\right)^2\right)\leq\log\left(1+\frac{1}{p_m^r}\right)+\log\left(\sum_{j=0}^{\alpha_m}\frac{1}{p_m^{jr}}\right)<\log\left(\frac{p_m^r}{p_m^r-1}\right),\] 
so $\displaystyle{\left(1+\frac{1}{p_m^r}\right)^2<\frac{p_m^r}{p_m^r-1}}$. We may write this as $\displaystyle{1+\frac{2}{p_m^r}+\frac{1}{p_m^{2r}}<1+\frac{1}
{p_m^r-1}}$, so $\displaystyle{2<\frac{p_m^r}{p_m^r-1}=1+\frac{1}{p_m^r-1}}$. As $p_m^r>2$, this is a contradiction. 
Hence, $\alpha_m=0$. By the definitions of $\alpha_m$ and $X_m$, this implies that \\ 
$\displaystyle{X_{m-1}+\log\left(1+\frac{1}{p_m^r}\right)>x}$ and that $X_m=X_{m-1}$. Therefore, \\ 
$\displaystyle{\log\left(1+\frac{1}{p_m^r}\right)>x-X_{m-1}=x-X_m}$.
However, recalling from \eqref{Eq2.1} that 
\[\sum_{i=m+1}^{\infty}\log\left(\sum_{j=0}^{\infty}\frac{1}{p_i^{jr}}\right)<x-X_m,\] we find that 
\[\displaystyle{\sum_{i=m+1}^{\infty}\log\left(\sum_{j=0}^{\infty}\frac{1}{p_i^{jr}}\right)<\log\left(1+\frac{1}{p_m^r}\right)},\] which is a contradiction because we originally assumed that \\ 
$\displaystyle{1+\frac{1}{p_m^r}\leq\prod_{i=m+1}^{\infty}\left(\sum_{j=0}^{\infty}\frac{1}{p_i^{jr}}\right)}$. Therefore, $\gamma=x$. 
\par 
We now know that $\displaystyle{\lim_{n\rightarrow\infty}X_n}=x$. To show that the range of $\log\sigma_{-r}$ is dense in $[0,\log(\zeta(r)))$, we need to construct a sequence $(C_n)_{n=1}^{\infty}$ of elements of the range of $\log\sigma_{-r}$ that satisfies $\displaystyle{\lim_{n\rightarrow\infty}C_n}=x$. We do so in the following fashion. For each positive integer $n$, write 
\[Y_n=\begin{cases} 1, & \mbox{if } \alpha_n\geq 0; \\ 0, & \mbox{if } \alpha_n=-1, \end{cases}\]  
\[Z_n=\begin{cases} 0, & \mbox{if } \alpha_n\geq 0; \\ 1, & \mbox{if } \alpha_n=-1, \end{cases}\]
and 
\[\beta_n=\begin{cases} \alpha_n, & \mbox{if } \alpha_n\geq 0; \\ 0, & \mbox{if } \alpha_n=-1. \end{cases}\]
Now, for each positive integer $n$, define $C_n$ by 
\[C_n=\sum_{k=1}^n\left(Y_k\log\left(\sum_{j=0}^{\beta_k}\frac{1}{p_k^{jr}}\right)+Z_k\log\left(\sum_{j=0}^n\frac{1}{p_k^{jr}}\right)\right).\]
Notice that, by the way we defined $X_n$, we have 
\[X_n=\sum_{k=1}^n\left(Y_k\log\left(\sum_{j=0}^{\beta_k}\frac{1}{p_k^{jr}}\right)+Z_k\log\left(\sum_{j=0}^{\infty}\frac{1}{p_k^{jr}}\right)\right).\]
Therefore, $\displaystyle{\lim_{n\rightarrow\infty}C_n=\lim_{n\rightarrow\infty}X_n=x}$.  All we need to do now is show that each $C_n$ is in the range of $\log\sigma_{-r}$. We have 
\[C_n=\sum_{k=1}^n\left(Y_k\log\left(\sum_{j=0}^{\beta_k}\frac{1}{p_k^{jr}}\right)+Z_k\log\left(\sum_{j=0}^n\frac{1}{p_k^{jr}}\right)\right)\]
\[=\sum_{\substack{k\in\mathbb{N} \\ k\leq n \\ \alpha_k\geq 0}}\log\left(\sum_{j=0}^{\alpha_k}\frac{1}{p_k^{jr}}\right)+\sum_{\substack{k\in\mathbb{N} \\ k\leq n \\ \alpha_k=-1}}\log\left(\sum_{j=0}^n\frac{1}{p_k^{jr}}\right)\] 
\[=\log\left[\left(\prod_{\substack{k\in\mathbb{N} \\ k\leq n \\ \alpha_k\geq 0}}\sigma_{-r}(p_k^{\alpha_k})\right)\left(\prod_{\substack{k\in\mathbb{N} \\ k\leq n \\ \alpha_k=-1}}\sigma_{-r}(p_k^n)\right)\right]\]
\[=\log\sigma_{-r}\left(\left(\prod_{\substack{k\in\mathbb{N} \\ k\leq n \\ \alpha_k\geq 0}}p_k^{\alpha_k}\right)\left(\prod_{\substack{k\in\mathbb{N} \\ k\leq n \\ \alpha_k\geq 0}}p_k^n\right)\right).\]
We finally conclude that if $\displaystyle{1+\frac{1}{p_m^r}\leq\prod_{i=m+1}^{\infty}\left(\sum_{j=0}^{\infty}\frac{1}{p_i^{jr}}\right)}$ for all positive integers $m$, then the range of $\sigma_{-r}$ is dense in the interval $[1,\zeta(r))$. 
\par 
Conversely, suppose that there exists some positive integer $m$ such that \\ 
$\displaystyle{1+\frac{1}{p_m^r}>\prod_{i=m+1}^{\infty}\left(\sum_{j=0}^{\infty}\frac{1}{p_i^{jr}}\right)}$. 
Fix some $N\in\mathbb{N}$, and let $\displaystyle{N=\prod_{i=1}^v q_i^{\gamma_i}}$ be the canonical prime factorization of $N$. If $p_s\vert N$ for some $s\in\{1,2,\ldots,m\}$, then 
\[\sigma_{-r}(N)\geq 1+\frac{1}{p_s^r}\geq 1+\frac{1}{p_m^r}.\]
On the other hand, if $p_s\nmid N$ for all $s\in\{1,2,\ldots,m\}$, then 
\[\sigma_{-r}(N)=\prod_{i=1}^v \sigma_{-r}(q_i^{\gamma_i})=\prod_{i=1}^v \left(\sum_{j=0}^{\gamma_i}\frac{1}{q_i^{jr}}\right)\] 
\[<\prod_{i=1}^v \left(\sum_{j=0}^{\infty}\frac{1}{q_i^{jr}}\right)<\prod_{i=m+1}^{\infty}\left(\sum_{j=0}^{\infty}\frac{1}{p_i^{jr}}\right).\] 
Because $N$ was arbitrary, this shows that there is no element of the range of $\sigma_{-r}$ in the interval $\displaystyle{\left[\prod_{i=m+1}^{\infty}\left(\sum_{j=0}^{\infty}\frac{1}{p_i^{jr}}\right),1+\frac{1}{p_m^r}\right)}$, which means that the range of $\sigma_{-r}$ is not dense in $[1,\zeta(r))$.  
\end{proof} 
Theorem \ref{Thm2.1} provides us with a method to determine values of $r>1$ with the property that the range of $\sigma_{-r}$ is dense in $[1,\zeta(r))$. However, doing so is still a somewhat difficult task. Luckily, for $r\in(1,2]$, we may greatly simplify the problem with the help of the following theorem. First, we need a short lemma.  
\begin{lemma} \label{Lem2.1}
If $j\in\mathbb{N}\backslash\{1,2,4\}$, then $\displaystyle{\frac{p_{j+1}}{p_j}<\sqrt{2}}$. 
\end{lemma}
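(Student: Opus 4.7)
The plan is to combine a quantitative prime gap result with a finite check for small indices. The crucial analytic input is Nagura's theorem (1952), which strengthens Bertrand's postulate by asserting that for every integer $n \geq 25$ there is a prime in the interval $(n, 6n/5]$. Applying this with $n = p_j$ gives $p_{j+1} \leq \tfrac{6}{5}p_j$ whenever $p_j \geq 25$, and since $\tfrac{6}{5} = 1.2 < \sqrt{2}$, this immediately yields $p_{j+1}/p_j < \sqrt{2}$ for all $j$ with $p_j \geq 25$.

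First, I would locate the threshold: since $p_9 = 23$ and $p_{10} = 29$, Nagura's bound covers all $j \geq 10$. That leaves only finitely many indices to verify, namely $j \in \{3, 5, 6, 7, 8, 9\}$ (the values $j = 1, 2, 4$ being excluded by hypothesis). Each of these is checked by a direct arithmetic comparison: for instance, $p_4/p_3 = 7/5$ satisfies $49 < 50 = 2 \cdot 25$, so $7/5 < \sqrt{2}$; and similarly one tabulates
\[
\frac{13}{11},\ \frac{17}{13},\ \frac{19}{17},\ \frac{23}{19},\ \frac{29}{23},
\]
each of which is easily seen to be less than $\sqrt{2} \approx 1.41421$.

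The main obstacle is obtaining a multiplicative prime gap bound tight enough to beat $\sqrt{2}$: Bertrand's postulate alone gives only $p_{j+1} < 2p_j$, which is insufficient, so one must cite (or reprove) a sharper result such as Nagura's. No other step presents any difficulty; once the Nagura bound is in hand, the argument reduces to the asymptotic statement plus a short case-by-case verification for the handful of small $j$ with $p_j < 25$.
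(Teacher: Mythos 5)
Your proof is correct, but it rests on a different analytic input than the paper's. The paper invokes Dusart's 2010 result that for $x \geq 396\,738$ there is a prime in $\left[x, x+\frac{x}{25\log^2 x}\right]$, which forces a computer search (via Mathematica) through all primes below $396\,738$ to finish the small cases. You instead use Nagura's 1952 theorem that for $n \geq 25$ there is a prime in $(n, 6n/5]$; since $6/5 < \sqrt{2}$, this handles every $j$ with $p_j \geq 25$, i.e., all $j \geq 10$, and leaves only the six indices $j \in \{3,5,6,7,8,9\}$ to check by hand, which you do correctly (e.g., $7/5 < \sqrt{2}$ since $49 < 50$, and $29/23 < \sqrt{2}$ since $841 < 1058$). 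Your route is more economical for this particular lemma: Dusart's bound is far stronger than needed to beat the fixed constant $\sqrt{2}$, and its strength is paid for by a large finite verification that cannot reasonably be done without a computer, whereas Nagura's weaker but uniform ratio bound reduces the finite part to a trivial hand calculation. The paper's choice would only pay off if one needed the gap ratio to tend to $1$ (say, to run the same argument with a constant smaller than $\sqrt{2}$); for the statement as given, your argument is cleaner and self-contained.
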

\begin{proof}
Pierre Dusart \cite{Dusart10} has shown that, for $x\geq 396\hspace{0.75 mm} 738$, there must be at  least one prime in the interval $\displaystyle{\left[x, x+\frac{x}{25\log^2x}\right]}$. Therefore, whenever $p_j>396\hspace{0.75 mm} 738$, we may set $x=p_j+1$ to get $\displaystyle{p_{j+1}\leq (p_j+1)+\frac{p_j+1}{25\log^2(p_j+1)}}$ $<\sqrt{2}p_j$. Using Mathematica 9.0 \cite{Wolfram09}, we may quickly search through all the primes less than $396\hspace{0.75 mm} 738$ to conclude the desired result.   
\end{proof} 
\begin{theorem} \label{Thm2.2} 
Let $r$ be a real number in the interval $(1,2]$. The range of $\sigma_{-r}$ is dense in the interval $[1,\zeta(r))$ if and only if $\displaystyle{1+\frac{1}{p_m^r}\leq\prod_{i=m+1}^{\infty}\left(\sum_{j=0}^{\infty}\frac{1}{p_i^{jr}}\right)}$ for all $m\in\{1,2,4\}$. 
\end{theorem}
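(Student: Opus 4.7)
The forward direction is immediate: if the range is dense, Theorem~\ref{Thm2.1} gives the product inequality for every positive integer $m$, in particular for $m\in\{1,2,4\}$. The content lies in the converse---assuming the inequality at $m\in\{1,2,4\}$, derive it for every $m\in\mathbb{N}$, then invoke Theorem~\ref{Thm2.1}. The remaining cases are $m=3$ and $m\geq 5$, which are exactly those for which Lemma~\ref{Lem2.1} yields $p_{m+1}/p_m<\sqrt{2}$; I treat them separately.

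For $m\geq 5$, the argument is direct. Expanding the Euler product as a Dirichlet series,
\[\prod_{i=m+1}^{\infty}\left(\sum_{j=0}^{\infty}\frac{1}{p_i^{jr}}\right)=\sum_{n\in S_m}\frac{1}{n^r}\geq 1+\sum_{k=1}^{\infty}\frac{1}{p_{m+k}^r},\]
where $S_m$ is the set of positive integers whose prime factors all exceed $p_m$. Iterating Lemma~\ref{Lem2.1}---valid because $m+k-1\geq 5\notin\{1,2,4\}$ for every $k\geq 1$---gives $p_{m+k}<2^{k/2}p_m$ by induction on $k$. Using $r\leq 2$, I obtain $p_{m+k}^{-r}>2^{-kr/2}p_m^{-r}\geq 2^{-k}p_m^{-r}$, and summing the geometric series yields $\sum_{k\geq 1}p_{m+k}^{-r}>p_m^{-r}$, establishing the desired bound strictly.

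For $m=3$, iteration of Lemma~\ref{Lem2.1} is blocked at the exception $j=4$, so I instead perform one ``backward step'' from the hypothesis at $m=4$. Writing $\prod_{i\geq 4}(1-p_i^{-r})^{-1}=(1-p_4^{-r})^{-1}\prod_{i\geq 5}(1-p_i^{-r})^{-1}$ and substituting $\prod_{i\geq 5}(1-p_i^{-r})^{-1}\geq 1+p_4^{-r}$, the desired inequality $1+p_3^{-r}\leq\prod_{i\geq 4}(1-p_i^{-r})^{-1}$ reduces algebraically to $p_4^r\leq 2p_3^r+1$, i.e., $7^r\leq 2\cdot 5^r+1$. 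Since $(7/5)^r\leq(7/5)^2=1.96<2$ on $(1,2]$, this is clear. Combined with the hypothesized cases $m\in\{1,2,4\}$, the product inequality is verified for every $m\in\mathbb{N}$, and Theorem~\ref{Thm2.1} yields the density.

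The main obstacle is the case $m=3$: since $j=4$ is an exception in Lemma~\ref{Lem2.1}, no purely iterative argument can reach $m=3$ from above, so any proof must invoke the hypothesis at $m=4$. The algebra ultimately rests on the tight inequality $(7/5)^r<2$, which holds on $(1,2]$ but fails near $r=\log 2/\log(7/5)\approx 2.06$, explaining the cutoff $r\leq 2$ in Theorem~\ref{Thm2.2}.
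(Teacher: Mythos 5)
Your proof is correct, and it takes a partially different route from the paper's, in a way worth noting. The paper's proof introduces $F(m,r)=\left(1+p_m^{-r}\right)\prod_{i=1}^m\left(\sum_{j=0}^{\infty}p_i^{-jr}\right)$, so that the condition becomes $F(m,r)\leq\zeta(r)$, and then shows via one application of Lemma \ref{Lem2.1} that $F(m+1,r)>F(m,r)$ for every $m\notin\{1,2,4\}$ (this single step is where $\sqrt{2}\leq 2^{1/r}$, i.e., $r\leq 2$, enters). For $m=3$ the paper concludes $F(3,r)<F(4,r)\leq\zeta(r)$, which is structurally the same one-step reduction to the $m=4$ hypothesis that you carry out, resting on essentially the same numerical fact ($7^r<2\cdot 5^r$ in the paper versus your $7^r\leq 2\cdot 5^r+1$). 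Where you genuinely diverge is the case $m\geq 5$: the paper uses a soft limiting argument --- $(F(m,r))_{m=5}^{\infty}$ is strictly increasing and converges to $\zeta(r)$, so every term lies below $\zeta(r)$ --- whereas you iterate Lemma \ref{Lem2.1} to get $p_{m+k}<2^{k/2}p_m$, keep only the terms $n=1$ and $n=p_{m+k}$ in the Dirichlet-series expansion of the tail product, and sum a geometric series to obtain the strict bound $\sum_{k\geq 1}p_{m+k}^{-r}>p_m^{-r}$. Both arguments are unconditional for $m\geq 5$; the paper's is shorter, while yours is fully quantitative (no appeal to a limit), produces an explicit lower bound on the tail product, and makes transparent that $r\leq 2$ is used in both exceptional regimes. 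Your closing observation --- that the critical inequality $(7/5)^r<2$ fails at $r=\log 2/\log(7/5)\approx 2.06$, which explains why the hypothesis of the theorem stops at $r=2$ --- is a nice piece of context that the paper does not state.
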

\begin{proof}
Let $\displaystyle{F(m,r)=\left(1+\frac{1}{p_m^r}\right)\prod_{i=1}^m\left(\sum_{j=0}^{\infty}\frac{1}{p_i^{jr}}\right)}$ so that the inequality \\ 
$\displaystyle{1+\frac{1}{p_m^r}\leq\prod_{i=m+1}^{\infty}\left(\sum_{j=0}^{\infty}\frac{1}{p_i^{jr}}\right)}$ is equivalent to $F(m,r)\leq\zeta(r)$. In light of Theorem \ref{Thm2.1}, it suffices to show that if $F(m,r)\leq\zeta(r)$ for all $m\in\{1,2,4\}$, then $F(m,r)\leq\zeta(r)$ for all $m\in\mathbb{N}$. Thus, let us assume that $r$ is such that $F(m,r)\leq\zeta(r)$ for all $m\in\{1,2,4\}$. If $m\in\mathbb{N}\backslash\{1,2,4\}$, then Lemma \ref{Lem2.1} tells us that $\displaystyle{\frac{p_{m+1}}{p_m}<\sqrt{2}\leq\sqrt[r]{2}}$, which implies that $\displaystyle{\frac{2}{p_{m+1}^r}>\frac{1}{p_m^r}}$. We then have 
\[F(m+1,r)=\left(1+\frac{1}{p_{m+1}^r}\right)\prod_{i=1}^{m+1}\left(\sum_{j=0}^{\infty}\frac{1}{p_i^{jr}}\right)>\left(1+\frac{1}
{p_{m+1}^r}\right)^2\prod_{i=1}^m\left(\sum_{j=0}^{\infty}\frac{1}{p_i^{jr}}\right)\] 
\[>\left(1+\frac{2}{p_{m+1}^r}\right)\prod_{i=1}^m\left(\sum_{j=0}^{\infty}\frac{1}{p_i^{jr}}\right)>\left(1+\frac{1}{p_m^r}\right)\prod_{i=1}^m\left(\sum_{j=0}^{\infty}\frac{1}{p_i^{jr}}\right)=F(m,r)\]
for all $m\in\mathbb{N}\backslash\{1,2,4\}$. This means that $F(3,r)<F(4,r)\leq\zeta(r)$. Furthermore, $F(m,r)<\zeta(r)$ for all integers $m\geq 5$ because $\displaystyle{(F(m,r))_{m=5}^{\infty}}$
is a strictly increasing sequence and $\displaystyle{\lim_{m\rightarrow\infty}F(m,r)=\zeta(r)}$.  
\end{proof}
We have seen that, for $r\in(1,2]$, the range of $\sigma_{-r}$ is dense in $[1,\zeta(r))$ if and only if $F(m,r)\leq\zeta(r)$ for all $m\in\{1,2,4\}$. Using Mathematica 9.0, one may plot a 
function $g_m(r)=F(m,r)-\zeta(r)$ for each $m\in\{1,2,4\}$. It is then easy to verify that $g_2$ has precisely one root, say $\eta$, in the interval $(1,2]$ (for anyone seeking a more rigorous proof of this fact, we mention that it is fairly simple to show that $g_2'(r)>0$ for all $r\in(1,2]$). Furthermore, one may confirm that $g_1(r),g_2(r),g_4(r)\leq 0$ for all $r\in(1,\eta]$ and that $g_2(r)>0$ for all $r\in(\eta,3]$. Hence, we have proven (or at least left the reader to verify) the first part of the following theorem.  
\begin{theorem} \label{Thm2.3} 
Let $\eta$ be the unique number in the interval $(1,2]$ that satisfies the equation 
\[\left(\frac{2^{\eta}}{2^{\eta}-1}\right)\left(\frac{3^{\eta}+1}{3^{\eta}-1}\right)=\zeta(\eta).\] If $r\in(1,\infty)$, then the range of the function $\sigma_{-r}$ is dense in the interval $[1,\zeta(r))$ if and only if $r\leq\eta$. 
\end{theorem}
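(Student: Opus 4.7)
For the ``if'' direction, and for the ``only if'' direction when $1 < r \leq 2$, I would rely on the discussion immediately preceding the theorem: the author has already indicated (via numerical plots together with the hinted-at monotonicity $g_2'(r) > 0$) that $g_2$ has a unique root $\eta$ in $(1,2]$, that $g_1(r), g_2(r), g_4(r) \leq 0$ on $(1,\eta]$, and that $g_2(r) > 0$ on $(\eta, 3]$. Combined with Theorem \ref{Thm2.2}, these facts immediately yield both that the range of $\sigma_{-r}$ is dense in $[1,\zeta(r))$ whenever $1 < r \leq \eta$ and that it fails to be dense whenever $\eta < r \leq 2$. What remains is the range $r > 2$, for which Theorem \ref{Thm2.2} has nothing to say.

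To handle $r > 2$, my plan is to verify the hypothesis of Theorem \ref{Thm2.1} directly at $m = 2$, i.e., to show that $F(2,r) > \zeta(r)$ for every $r > 2$. Writing $\zeta(r)$ as its Euler product, this inequality is equivalent to
\[\left(1 + \frac{1}{3^r}\right)\prod_{i=3}^{\infty}\left(1 - \frac{1}{p_i^r}\right) > 1,\]
and, after expanding the product over the primes $\geq 5$, to the concrete statement
\[\sum_{\substack{n \geq 5 \\ \gcd(n,6) = 1}} \frac{1}{n^r} < \frac{1}{3^r}.\]

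To prove this uniformly for all $r \geq 2$, I would factor $1/n^r = (1/n^2)(1/n^{r-2})$ and use $n \geq 5$ to obtain $1/n^{r-2} \leq 1/5^{r-2}$. The remaining sum $\sum_{\gcd(n,6)=1,\, n \geq 5} 1/n^2$ equals $(1 - 1/4)(1 - 1/9)\zeta(2) - 1 = \pi^2/9 - 1$, so the desired inequality reduces to $5^{-(r-2)}(\pi^2/9 - 1) < 3^{-r}$, or equivalently $25(3/5)^r(\pi^2/9 - 1) < 1$. At $r = 2$ this is precisely $\pi^2 < 10$, and the left side is strictly decreasing in $r$, so the inequality holds on all of $[2, \infty)$. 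Theorem \ref{Thm2.1} applied at $m = 2$ then yields the failure of density, completing the ``only if'' direction. The main obstacle in this step is that both $F(2,r)$ and $\zeta(r)$ tend to $1$ as $r \to \infty$, so a naive estimate of either side individually is useless; the factorization trick is what decouples the two sides and yields a single clean bound valid simultaneously for every $r \geq 2$.
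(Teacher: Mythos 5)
Your proposal is correct, and for $r>2$ it takes a genuinely different route from the paper. The paper splits the ``only if'' direction into $r\in(\eta,3]$, where it invokes the numerically verified claim that $g_2(r)>0$ on all of $(\eta,3]$, and $r>3$, where it proves $F(1,r)>\zeta(r)$ by the elementary chain $F(1,r)>\left(1+\frac{1}{2^r}\right)^2>1+\frac{1}{2^r}+\int_2^{\infty}x^{-r}\,dx>\zeta(r)$, an integral-comparison bound at the prime $2$ that only becomes true once $r>3$. You instead work with $m=2$ uniformly on $[2,\infty)$: your Euler-product reformulation $F(2,r)>\zeta(r)\iff\sum_{n\ge 5,\ \gcd(n,6)=1}n^{-r}<3^{-r}$ is correct, the evaluation $\sum_{n\ge 5,\ \gcd(n,6)=1}n^{-2}=\frac{\pi^2}{9}-1$ is correct, and the decoupling $n^{-r}=n^{-2}\,n^{-(r-2)}\le n^{-2}\,5^{-(r-2)}$ reduces everything to $25\left(\frac{3}{5}\right)^r\left(\frac{\pi^2}{9}-1\right)<1$, which at $r=2$ is exactly $\pi^2<10$ and whose left side is decreasing in $r$; Theorem \ref{Thm2.1} at $m=2$ then kills density. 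This buys you two things the paper does not have: the unproved numerical input is needed only on $(1,2]$ (the range where Theorem \ref{Thm2.2} applies anyway), since your analytic bound covers $(2,3]$ rigorously where the paper leans on Mathematica; and a single argument handles all $r\ge 2$ instead of two separate cases. What the paper's route buys in exchange is brevity for large $r$: the $m=1$ integral estimate needs no Euler-product manipulation or exact evaluation of a restricted $\zeta(2)$-sum, at the cost of working only for $r>3$ and leaving the gap $(2,3]$ to numerics.
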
 
\begin{proof}
In virtue of the preceding paragraph, we know from the fact that 
\[g_2(\eta)=F(2,\eta)-\zeta(\eta)=\left(\frac{2^{\eta}}{2^{\eta}-1}\right)\left(\frac{3^{\eta}+1}{3^{\eta}-1}\right)-\zeta(\eta)=0\]
that if $r\in(1,3]$, then the range of $\sigma_{-r}$ is dense in $[1,\zeta(r))$ if and only if $r\leq\eta$. We now show that the range of $\sigma_{-r}$ is not dense in $[1,\zeta(r))$ if $r>3$. To do so, we merely need to show that $F(1,r)>\zeta(r)$ for all $r>3$. For $r>3$, we have 
\[F(1,r)=\left(1+\frac{1}{2^r}\right)\sum_{j=0}^{\infty}\frac{1}{2^{jr}}>\left(1+\frac{1}{2^r}\right)^2=1+\frac{1}{2^r}+\frac{3}{4}\left(\frac{1}{2^{r-1}}\right)\] 
\[>1+\frac{1}{2^r}+\frac{1}{(r-1)2^{r-1}}=1+\frac{1}{2^r}+\int_2^{\infty}\frac{1}{x^r}dx>\zeta(r).\]
\end{proof}
\section{An Open Problem}
We end by acknowledging that it might be of interest to consider the number of ``gaps" in the range of $\sigma_{-r}$ for various $r$. For example, for which values of $r\in(1,\infty)$ is there precisely one gap in the range of $\sigma_{-r}$? More generally, if we are given a positive integer $L$, then, for what values of $r>1$ is the closure of the range of $\sigma_{-r}$ a union of exactly $L$ disjoint subintervals of $[1,\zeta(r)]$?

\bigskip
\hrule
\bigskip

\noindent 2010 {\it Mathematics Subject Classification}:  Primary 11B05; Secondary 11A25.

\noindent \emph{Keywords: } Density, divisor function

\end{document}